\theoremstyle{plain}
\newtheorem{theorem}{Theorem}
\newtheorem{lemma}{Lemma}
\newtheorem{proposition}{Proposition}
\newtheorem{corollary}{Corollary}
\theoremstyle{definition}
\newtheorem{definition}{Definition}
\theoremstyle{remark}
\newtheorem{remark}{Remark}
\def \M{\mathcal{M}}
\def \S{\mathcal{S}}
\def \g{\bar{g}}
\def \tangsp{\mathfrak{X}(\S)}
\def \normsp{\mathfrak{X}(\S)^\perp}
\def \tr{\textsf{tr}}
\def \Im{\textsf{Im}}
\def \dim{\textsf{dim}}
\def \tildeh{\widetilde{h}}
\def \tildeA{\widetilde{A}}
\def \id{\mathbf{1}}
\def \U{\mathscr{U}}
\begin{document}

\title[Umbilical spacelike submanifolds of arbitrary co-dimension]
{Umbilical spacelike submanifolds\\of arbitrary co-dimension}

\author[N. Cipriani]{Nastassja Cipriani}
\address{KU Leuven, Department of Mathematics, Celestijnenlaan 200B -- Box 2400, BE-3001 Leuven, Belgium and F\'isica Te\'orica, Universidad del Pa\'is Vasco, Apartado 644, 48080 Bilbao, Spain} 
\email{nastassja.cipriani@wis.kuleuven.be}

\author[J.M.M. Senovilla]{Jos\'e M. M. Senovilla}
\address{F\'isica Te\'orica, Universidad del Pa\'is Vasco, Apartado 644, 48080 Bilbao, Spain}
\email{josemm.senovilla@ehu.es}

\thanks{This work is partially supported by the Belgian Interuniversity Attraction Pole P07/18 (Dygest) and by the KU Leuven Research Fund project 3E160361 ``Lagrangian and calibrated submanifolds". NC and JMMS are supported under grant FIS2014-57956-P (Spanish MINECO--Fondos FEDER) and project IT956-16 of the Basque Government. JMMS is also supported under project UFI 11/55 (UPV/EHU)}

\begin{abstract}
Given a semi-Riemannian manifold, we give necessary and sufficient conditions for a Riemannian submanifold of arbitrary co-dimension to be umbilical along normal directions. We do that by using the so-called \emph{total shear tensor}, i.e., the trace-free part of the second fundamental form.\\
We define the \emph{shear space} and the \emph{umbilical space} as the spaces generated by the total shear tensor and by the umbilical vector fields, respectively. We show that the sum of their dimensions must equal the co-dimension.

\end{abstract}

\keywords{Umbilical points, umbilical submanifolds, shear}

\subjclass[2010]{53B25, 53B30, 53B50}

\maketitle

\section{Introduction}

The notions of umbilical point and umbilical submanifold are classical in differential geometry. They have mainly been studied in the Riemannian setting and, apart from very few exceptions, they have been applied to submanifolds of co-dimension one (hypersurfaces). In \cite{CSV2016} the authors studied these concepts in a slightly more general framework, namely, allowing the ambient manifold to have arbitrary signature while requiring the submanifold to be \emph{spacelike} (i.e.\! endowed with a Riemannian induced metric). Motivated by the applications to gravitation and general relativity, in \cite{CSV2016} the authors focused on spacelike submanifolds of \emph{co-dimension two}. Notice that the results presented in \cite{CSV2016} generalized those presented in \cite{Senovilla 2012}. In the latter, the study of umbilical \emph{surfaces} ($2$-dimensional) in $4$-dimensional Lorentzian manifolds was carried out.

In the present paper we generalize what has been done in \cite{Senovilla 2012} and \cite{CSV2016}. We consider spacelike submanifolds of \emph{arbitrary co-dimension} and give a characterization of those which are umbilical with respect to some normal directions. Observe that when the co-dimension is one, the normal bundle is $1$-dimensional and thus the submanifold can only be umbilical along the unique normal direction. On the other hand, when the co-dimension is higher than one, there are several possibilities and the submanifold can be umbilical with respect to some normal vectors but non-umbilical with respect to others.

In order to characterize umbilical spacelike submanifolds we make use of the so-called \emph{total shear tensor}. This is defined as the trace-free part of the second fundamental form and it appears often in the mathematical literature, especially in conformal geometry. Nevertheless, it had never been given a name prior to \cite{CSV2016} and, surprisingly, its relationship with the umbilical properties of submanifolds seemed to be almost unknown or is not explicitly mentioned at least.

We introduce the notions of \emph{shear space} and \emph{umbilical space}. They are defined as the space generated by the image of total shear tensor and the one generated by the umbilical vector fields, respectively. They both belong to the normal bundle and they happen to be mutually orthogonal. We show that the existence of umbilical directions ``shrinks'' the shear space reducing its dimension. More precisely, the dimensions of the shear space and the umbilical space are linked in such a way that the sum of the two must equal the co-dimension of the submanifold. Moreover, in specific situations --for instance when the ambient manifold is Riemannian-- the direct sum of the two spaces generate the whole normal bundle.

The plan of the paper is as follows. In Section 2 we recall some basic concepts of submanifold theory, we introduce the shear objects and give the definitions of umbilical point, umbilical submanifold and umbilical space. In Section 3 we show how the shear space and the umbilical space are related, we present necessary and sufficient conditions for the submanifold to be umbilical and give some final remarks.

\section{Preliminaries}

\subsection{Basic concepts of submanifold theory}

We consider an orientable $n$-dimensional \emph{spacelike} submanifold $(\S, g)$ of a semi-Riemannian manifold $(\M,\g)$ with inmersion $\Phi: \S \longrightarrow \M$ and co-dimension $k$. Hence, $g:= \Phi^{\star}\g $ is positive definite everywhere on $\S$, so that $(\S,g)$ is in particular an oriented Riemannian manifold. Let $\tangsp$ and $\normsp$ denote the set of tangent and normal vector fields, respectively, on $\S$. The classical formulas of Gauss and Weingarten provide the decomposition of the vector field derivatives into their tangent and normal components \cite{KobayashiNomizu,Kriele,ONeill} as
\begin{align*}
& \overline{\nabla}_X Y = \nabla_X Y + h(X,Y), \hspace{1cm} \forall X,Y \in \tangsp\\
& \overline{\nabla}_X \xi = -A_{\xi}X + \nabla^{\perp}_X \xi, \hspace{1cm} \forall \xi \in \normsp, \, \, \forall X\in \tangsp
\end{align*}
where $\overline{\nabla}$ and $\nabla$ are the Levi-Civita connections of $(\M,\g)$ and $(\S,g)$ respectively, $h$ is the \emph{second fundamental form} or \emph{shape tensor} of the immersion and $A_{\xi}$ the \emph{Weingarten operator} relative to $\xi$. The derivation $\nabla^{\perp}$ so defined determines a connection on the normal bundle, $h(X,Y)=h(Y,X) \in \normsp$ for all $X,Y \in \tangsp$ and acts linearly (as a 2-covariant tensor) on its arguments while $A_{\xi}$ is self-adjoint for every $\xi \in \normsp$. The following relation holds
\begin{align*}
g(A_{\xi}X,Y) = \g(h(X,Y),\xi), \hspace{1cm} \forall X,Y \in \tangsp, \, \, \forall \xi \in \normsp .
\end{align*}
The \emph{mean curvature vector field} $H\in\normsp$ is $1/n$ times the trace (with respect to $g$) of $h$, so that for instance one can write \cite{KobayashiNomizu,Kriele,ONeill}
\begin{align*}
H = \frac{1}{n} \sum_{i=1}^n h(e_i,e_i)
\end{align*}
where $\{e_1,\ldots,e_n\}$ denotes an orthonormal frame on $\tangsp$.

\subsection{The total shear tensor and the shear operators}

Using the previous notations and conventions, the following definition is taken from \cite{CSV2016}
\begin{definition}
The \emph{total shear tensor} $\tildeh$ is defined as the trace-free part of the second fundamental form:
\begin{align*}
\tildeh(X,Y)= h(X,Y) - g(X,Y) H.
\end{align*}
The \emph{shear operator} associated to $\xi \in \normsp$ is the trace-free part of the corresponding shape operator:
\begin{align*}
\tildeA_{\xi} = A_{\xi} - \frac 1n \tr \tildeA_\xi \id
\end{align*}
where $\id$ denotes the identity operator.
\end{definition}

The total shear tensor and shear operators are obviously related by 
\begin{align}\label{relationship between any tildeA and tildeh}
g(\tildeA_{\xi}X,Y) = \g(\tildeh(X,Y),\xi), \hspace{1cm} \forall X,Y \in \tangsp \quad \forall\xi\in\normsp.
\end{align}

To the authors' knowledge, the trace-free part of the second fundamental form had never been given a name prior to \cite{CSV2016}. Nevertheless, it is easy to find it in the literature, in Riemannian settings, especially in connection with the conformal properties of submanifolds. A pioneer analysis appears in \cite{Fialkow 1944}, where an extensive exposition concerning conformal invariants was given. The total shear tensor is also in the basis of the definition of the so-called generalized Willmore functional \cite{Willmore 1988}. 

Denote by $\{\xi_1,\ldots,\xi_k\}$ a local frame in $\normsp$. With respect to this frame, there exist $k$ shear operators $\tildeA_1,\ldots,\tildeA_k$ such that the total shear tensor $\tildeh$ decomposes as
\begin{align}\label{decomposition formula for tildeh in a general basis}
\tildeh(X,Y) = \sum_{i=1}^k g(\tildeA_i X,Y) \xi_i , \hspace{1cm} \forall X,Y \in \tangsp .
\end{align}
If $\{\xi_1,\ldots,\xi_k\}$ is orthonormal, i.e. $\g(\xi_i,\xi_j)=\epsilon_i \delta_{ij}$ with $\epsilon_i^2=1$, then $\tildeA_i=\epsilon_i \tildeA_{\xi_i}$ for all $i$. However, in general, $\tildeA_i$ does not need be proportional to $\tildeA_{\xi_i}$, rather being a linear combination of $\tildeA_{\xi_1},\ldots,\tildeA_{\xi_k}$.

Given any normal vector field $\eta\in\normsp$, by (\ref{decomposition formula for tildeh in a general basis}) its corresponding shear operator $\tildeA_\eta$ can be expressed in terms of $\tildeA_1,\ldots,\tildeA_k$. Indeed, formulas (\ref{relationship between any tildeA and tildeh}) and (\ref{decomposition formula for tildeh in a general basis}) imply
\begin{align}\label{decomposition formula for any shear operator in a general basis}
\tildeA_\eta = \sum_{i=1}^k \g(\xi_i,\eta) \tildeA_i.
\end{align}

In the definitions that follow we introduce two useful concepts.
\begin{definition}\label{definition: shear space of S at p}
At any point $p\in \S$, the set
\begin{align*}
\Im\,\tildeh_p := \text{span} \left\{ \tildeh(v,w) : v,w\in T_p\S \right\} \subseteq T_p\S^\perp
\end{align*}
is called the \emph{shear space} of $\S$ at $p$.
\end{definition}

If $\mathscr{N}_p^1 = \text{span} \left\{ h(v,w) : v,w\in T_p\S \right\} \subseteq T_p\S^\perp$ denotes the first normal space of $\S$ at the point $p\in\S$, then for every $p$ in $\S$ we have $\Im\,\tildeh_p\subseteq\mathscr{N}_p^1$, hence $\dim\,\Im\,\tildeh_p \leq \dim\mathscr{N}_p^1\leq k$. Furthermore,  given any orthonormal basis $\{e_1,\ldots,e_n\}$ in $T_p\S$, the image of $\tildeh_p$ is spanned by the $n(n+1)/2$ vectors $\tildeh(e_i,e_j)$, for $i\leq j$. Because $\sum_{i=1}^n\tildeh(e_i,e_i)=0$, these vectors are not linearly independent. In particular, the dimension of $\Im\,\tildeh_p$ can be at most $n(n+1)/2-1$. Therefore
\begin{align}\label{dim(Im tildeh) < min(k,n(n+1)/2)}
\dim\,\Im\,\tildeh_p \leq \min \left\{ k , \frac{n(n+1)}{2} - 1 \right\}.
\end{align}

Formula (\ref{decomposition formula for any shear operator in a general basis}) for the decomposition of any shear operator implies that if $\dim\,\Im\,\tildeh_p=d$ then any $d+1$ shear operators must be linearly dependent in $p$. The converse of this is also true, so that
\begin{align}\label{dim(Im tildeh) = number of linearly independent shear operators}
\dim\,\Im\,\tildeh_p
= \max \left\{ d \, | \, \exists \, \eta_1,\ldots,\eta_d\in\normsp : \tildeA_{\eta_1}, \ldots, \tildeA_{\eta_d} \text{ are linearly independent at } p \right\}
\end{align}

Notice that similar formulas to (\ref{dim(Im tildeh) < min(k,n(n+1)/2)}) and (\ref{dim(Im tildeh) = number of linearly independent shear operators}) also hold for the dimension of the first normal space. Indeed, for the dimension of $\mathscr{N}_p^1$ we will have $\dim\,\mathscr{N}_p^1\leq\min\left\{k,n(n+1)/2\right\}$; as for (\ref{dim(Im tildeh) = number of linearly independent shear operators}), the Weingarten operators will just take the place of the shear operators. These two formulas for $\mathscr{N}_p^1$ imply that if $k-n(n+1)/2$ is positive, then there exist $k-n(n+1)/2$ linearly independent Weingarten operators that vanish at $p$. 

\begin{definition}
Assume that the dimension of the shear spaces $\Im\,\tildeh_p$ is constant on $\S$, i.e.\! there exists $d\in\mathbb{N}$ with $0\leq d\leq k$ such that $\dim\,\Im\,\tildeh_p=d$ for all $p\in\S$. The set
\begin{align*}
\Im\,\tildeh = \bigcup_{p\in\S} \Im\,\tildeh_p \subseteq \normsp
\end{align*}
is called the \emph{shear space} of $\S$. Then, the shear space is a module over the ring of functions defined on $\S$ with dimension $d$. 
\end{definition}

The properties already presented relating $\dim\,\Im\,\tildeh_p$ to the shear operators can be extended to $\dim\,\Im\,\tildeh$ accordingly.

\subsection{Umbilical points and umbilical submanifolds}

For works concerning umbilical submanifolds and some previous results in both Riemannian and semi-Riemannian settings the reader can consult, e.g.,  \cite{CSV2016} and references therein.

For hypersurfaces (co-dimension 1), a point can only be umbilical along the unique normal direction. This situation changes completely for higher co-dimensions, in which case there are multiple directions along which a point can be umbilical.

\begin{definition}
Using the notations and conventions introduced above for the immersion $\Phi: (\S,g) \to (\M,\g)$, a point $p\in\S$ is said to be
\begin{itemize}
\item \emph{umbilical with respect to} $\xi_p\in T_p\S^\perp$ if $A_{\xi_p}$ is proportional to the identity;
\item \emph{totally umbilical} if it is umbilical with respect to all $\xi_p\in T_p\S^\perp$.
\end{itemize}
\end{definition}

A point $p\in\S$ is umbilical with respect to $\xi_p\in T_p\S^\perp$ if and only if $A_{\xi_p} = (\tr\tildeA_{\xi_p}/n) \id$ or, equivalently, $\tildeA_{a \xi_p}=0$ for any $a\in \mathbb{R}\setminus \{0\}$. $\xi_p$-umbilicity is thus a property that gives information about span$\{\xi_p\}$ regardless of the length and the orientation of $\xi_p$.
Hence, we will usually state that $p$ is umbilical with respect to the normal \emph{direction} spanned by $\xi_p$. On the other hand, $p$ is totally umbilical if and only if $h(v,w)=g(v,w)H_p$ for all $v,w \in T_p\S$ or, equivalently, if and only if $\tildeh=0$ at $p$. This fact was already known for hypersurfaces in Riemannian settings and can be found, e.g., in \cite{Fialkow 1944}. However,  the relationship between the total shear tensor and the umbilical properties of submanifolds, treated in \cite{CSV2016} and in the present article, is substantially new.

\begin{definition}
Given any point $p\in \S$, the set
\begin{align*}
\U_p = \left\{ \xi_p \in T_p\S^\perp : p \text{ is umbilical with respect to } \xi_p \right\} \subseteq T_p\S^\perp
\end{align*}
is called the \emph{umbilical space} of $\S$ at $p$.
\end{definition}

\begin{lemma}\label{lemma: U(p) is a vector space}
The umbilical space $\U_p$ is a vector space for every $p\in\S$.
\end{lemma}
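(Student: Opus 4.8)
The plan is to reformulate the umbilicity condition as the vanishing of a quantity that depends \emph{linearly} on the normal vector, so that $\U_p$ becomes the kernel of a linear map and is therefore automatically a vector subspace of $T_p\S^\perp$.

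First I would record a clean characterization of umbilicity: for a fixed $\xi_p\in T_p\S^\perp$, the point $p$ is umbilical with respect to $\xi_p$ if and only if $\tildeA_{\xi_p}=0$. Indeed, $A_{\xi_p}=\lambda\,\id$ for some $\lambda\in\R$ forces $\lambda=\tr A_{\xi_p}/n$ upon taking traces, so proportionality to the identity is equivalent to $A_{\xi_p}-(\tr A_{\xi_p}/n)\,\id=0$, i.e.\ to $\tildeA_{\xi_p}=0$. This yields $\U_p=\left\{\xi_p\in T_p\S^\perp : \tildeA_{\xi_p}=0\right\}$, with $0\in\U_p$ since $A_0=0$ is (trivially) proportional to the identity.

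Next I would exhibit the linear dependence of the shear operator on its normal argument. Evaluating (\ref{relationship between any tildeA and tildeh}) at $p$, we have $g(\tildeA_{\xi_p}v,w)=\g(\tildeh_p(v,w),\xi_p)$ for all $v,w\in T_p\S$; the right-hand side is $\R$-linear in $\xi_p$, hence so is the left-hand side for every pair $v,w$, and nondegeneracy of $g$ gives $\tildeA_{a\xi_p+b\zeta_p}=a\,\tildeA_{\xi_p}+b\,\tildeA_{\zeta_p}$ for all $a,b\in\R$ and $\xi_p,\zeta_p\in T_p\S^\perp$. Equivalently, this is just formula (\ref{decomposition formula for any shear operator in a general basis}), which displays $\tildeA_{\xi_p}=\sum_{i=1}^k\g(\xi_i,\xi_p)\,\tildeA_i$ as a linear expression in $\xi_p$. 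Thus $\xi_p\mapsto\tildeA_{\xi_p}$ is a linear map from $T_p\S^\perp$ into the vector space of (self-adjoint, trace-free) endomorphisms of $T_p\S$.

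Finally, combining the two steps, $\U_p$ is precisely the kernel of this linear map and is therefore a vector subspace of $T_p\S^\perp$. I do not expect any serious obstacle: the argument is essentially the observation that a homogeneous linear condition cuts out a subspace. The only point requiring a little care is to state the umbilicity criterion in terms of $\tildeA_{\xi_p}$ for \emph{all} $\xi_p$ (including $\xi_p=0$), rather than only for the nonzero multiples $a\xi_p$ appearing in the discussion preceding the lemma, so that closure under addition and the presence of the additive identity are both manifest.
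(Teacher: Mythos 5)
Your proof is correct and follows essentially the same route as the paper: both rest on the equivalence of $\xi_p$-umbilicity with $\tildeA_{\xi_p}=0$ together with the linearity of $\xi_p\mapsto\tildeA_{\xi_p}$, the paper verifying closure under linear combinations directly while you package the same fact as ``$\U_p$ is the kernel of a linear map.'' Your additional justifications (the trace argument and the derivation of linearity from (\ref{relationship between any tildeA and tildeh})) merely fill in details the paper leaves implicit.
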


\begin{proof}
Let $\xi_p,\eta_p\in \U_p$, so that by definition $\tildeA_{\xi_p}=\tildeA_{\eta_p}=0$. Let $a,b\in \mathbb{R}$ and consider the normal vector $a\xi_p+b\eta_p$. By linearity we have $\tildeA_{a\xi_p+b\eta_p}=a\tildeA_{\xi_p}+b\tildeA_{\eta_p}=0$. It follows that $p$ is umbilical with respect to $a\xi_p+b\eta_p$, hence $a\xi_p+b\eta_p$ belongs to $\U_p$.
\end{proof}

It follows from Lemma \ref{lemma: U(p) is a vector space} that $\dim\,\U_p$ is well defined. Notice that $\dim\,\U_p=m$ if and only if $p$ is umbilical with respect to \emph{exactly} $m$ linearly independent normal directions. Moreover, by formulas (\ref{dim(Im tildeh) < min(k,n(n+1)/2)}) and (\ref{dim(Im tildeh) = number of linearly independent shear operators}) it follows that if $k-n(n+1)/2+1$ is positive, then $\dim\,\U_p\geq k-n(n+1)/2+1$.

\begin{definition}
Using the notations and conventions introduced above for the immersion $\Phi: (\S,g) \to (\M,\g)$, the submanifold $(\S,g)$ is said to be
\begin{itemize}
\item \emph{umbilical with respect to} $\xi\in\normsp$ if $A_{\xi}$ is proportional to the identity;
\item \emph{totally umbilical} if it is umbilical with respect to all $\xi\in\normsp$.
\end{itemize}
\end{definition}

The properties presented above for umbilical points can be extended to umbilical submanifolds accordingly. $\S$ is umbilical with respect to $\xi\in\normsp$ if and only if $\xi_p\in\U_p$ for all $p\in\S$. More in general, $\S$ is umbilical with respect to \emph{exactly} $m$ linearly independent non-zero normal vector fields $\xi_1,\ldots,\xi_m\in\normsp$ if and only if $(\xi_1)_p,\ldots,(\xi_m)_p\in\U_p$ for all $p\in\S$. Equivalently, if and only if $\dim\,\U_p=m$ for all $p\in\S$. This leads to the following definition.

\begin{definition}
Assume that the dimension of the umbilical spaces $\U_p$ is constant on $\S$, i.e.\! there exists $m\in\mathbb{N}$ with $0\leq m\leq k$ such that $\dim\,\U_p=m$ for all $p\in\S$. Then the set
\begin{align*}
\U = \left\{ \xi \in\normsp : \S \text{ is umbilical with respect to } \xi \right\} \subseteq \normsp
\end{align*}
is called the \emph{umbilical space} of $\S$.
\end{definition}

The umbilical space  of $\S$ is such that $\U=\cup_{p\in\S} \U_p$. In Lemma \ref{lemma: U(p) is a vector space} we proved that $\U_p$ is a vector space for every $p$. Similarly we can prove that $\U$ is a finitely generated module over the ring of functions defined on $\S$ with $\dim\,  \U =m$.

\section{Results}

\subsection{The relationship between $\U$ and $\Im\,\tildeh$}

\begin{proposition}\label{proposition: k-dimU=dim(im tildeh) at a point}
Let $\Phi: (\S,g) \to (\M,\g)$ be an isometric immersion of an $n$-dimensional Riemannian manifold into a semi-Riemannian manifold with co-dimension $k$. Let $\U_p$ and $\Im\,\tildeh_p$ be the umbilical space and the shear space, respectively, of $\S$ at any point $p\in\S$. Then
\begin{align*}
\U_p = (\Im\,\tildeh_p)^\perp.
\end{align*}
Moreover,
\begin{align*}
k - \dim\,\U_p = \dim\, \Im \, \tildeh_p.
\end{align*}
\end{proposition}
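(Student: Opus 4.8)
The plan is to reduce both assertions to the defining relation (\ref{relationship between any tildeA and tildeh}) between the shear operators and the total shear tensor, combined with the non-degeneracy of $\g$ on the normal space.

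First I would rewrite umbilicity in operator form. By the remarks following the definition of umbilical point, $p$ is umbilical with respect to $\xi_p\in T_p\S^\perp$ if and only if $\tildeA_{\xi_p}=0$; hence $\xi_p\in\U_p$ if and only if $g(\tildeA_{\xi_p}v,w)=0$ for all $v,w\in T_p\S$. Now I would feed this through (\ref{relationship between any tildeA and tildeh}), which reads $g(\tildeA_{\xi_p}v,w)=\g(\tildeh(v,w),\xi_p)$. Thus $\xi_p\in\U_p$ if and only if $\g(\tildeh(v,w),\xi_p)=0$ for all $v,w\in T_p\S$, i.e.\ if and only if $\xi_p$ is $\g$-orthogonal to every generator of $\Im\,\tildeh_p$. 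Since $\Im\,\tildeh_p$ is by definition the span of these generators, this says precisely $\xi_p\in(\Im\,\tildeh_p)^\perp$, the orthogonal complement being taken inside $T_p\S^\perp$. This chain of equivalences is the whole content of the first identity $\U_p=(\Im\,\tildeh_p)^\perp$.

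For the dimension formula I would invoke the standard fact that, in a finite-dimensional vector space $V$ carrying a non-degenerate symmetric bilinear form, any subspace $W$ obeys $\dim W+\dim W^\perp=\dim V$. Applied to $V=T_p\S^\perp$ and $W=\Im\,\tildeh_p$, together with $\U_p=(\Im\,\tildeh_p)^\perp$, this immediately gives $\dim\,\U_p=k-\dim\,\Im\,\tildeh_p$, as claimed.

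The one step that genuinely requires the hypotheses is verifying that $\g$ restricts to a non-degenerate form on the $k$-dimensional normal space $T_p\S^\perp$; this is where the spacelike assumption is essential. Because $g=\Phi^{\star}\g$ is positive definite, $T_p\S$ is a non-degenerate (indeed definite) subspace of $(T_p\M,\g)$, so that $T_p\M=T_p\S\oplus T_p\S^\perp$ and $\g$ is non-degenerate on the summand $T_p\S^\perp$. I would emphasize that $\g$ need not be non-degenerate on $\Im\,\tildeh_p$ itself --- in the semi-Riemannian setting the shear space can be a degenerate subspace --- but this is harmless, since the complement-dimension identity only requires non-degeneracy of the ambient space $T_p\S^\perp$, not of $W$. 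This is the subtle point I expect to be the main thing to get right.
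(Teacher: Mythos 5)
Your proposal is correct and takes essentially the same approach as the paper: the identity $\U_p=(\Im\,\tildeh_p)^\perp$ is obtained by exactly the same chain of equivalences through (\ref{relationship between any tildeA and tildeh}), and the dimension formula rests on the same underlying fact, the non-degeneracy of $\g$ on $T_p\S^\perp$ (the paper re-derives the count with a basis of $T_p\S^\perp$ adapted to $\Im\,\tildeh_p$, while you invoke the standard identity $\dim\,W+\dim\,W^\perp=\dim\,V$ for non-degenerate forms). Your explicit verification that the spacelike hypothesis makes $\g$ non-degenerate on the normal space, and your remark that $\Im\,\tildeh_p$ itself may be a degenerate subspace yet this is harmless, spell out precisely the subtlety the paper leaves implicit.
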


Here $(\Im\,\tildeh_p)^\perp$ is defined as the subspace of $T_p\S^\perp$ orthogonal to $\Im\,\tildeh_p$, namely
\begin{align*}
(\Im\,\tildeh_p)^\perp = \left\{ \eta_p\in T_p\S^\perp \, | \, \forall \xi_p\in\Im\,\tildeh_p : \g(\eta_p,\xi_p)=0 \right\}.
\end{align*}

\begin{proof}
By definition, a normal vector $\xi_p$ belongs to $\U_p$ if $\tildeA_{\xi_p}=0$. Equivalently, if $g(\tildeA_{\xi_p}(v),w)=0$ in $p$, for all $v,w\in T_p\S$. By formula (\ref{relationship between any tildeA and tildeh}), this holds if and only if $\xi_p\in(\Im\,\tildeh_p)^\perp$. Hence $\U_p=(\Im\,\tildeh_p)^\perp$.\\
Suppose $\Im\,\tildeh_p=\{0\}$, then it is clear that $\U_p=(\Im\,\tildeh_p)^\perp=T_p\S^\perp$ and the relation between the dimensions holds. Now assume $\Im\,\tildeh_p\neq\{0\}$. We can choose a basis $\{(\xi_1)_p,\ldots,(\xi_k)_p\}$ of $T_p\S^\perp$ such that $\{(\xi_1)_p,\ldots,(\xi_d)_p\}$ is a basis of $\Im\,\tildeh_p$. A normal vector $\eta_p$ belongs to $\U_p=(\Im\,\tildeh_p)^\perp$ if and only if $\g(\eta_p,(\xi_j)_p)=0$ for $j=1,\ldots,d$. Since $(\xi_1)_p,\ldots,(\xi_d)_p$ are linearly independent and $\g$ is non-degenerate, these are $d$ linearly independent conditions on the components of $\eta_p$ and hence $\dim\U_p=k-d$.
\end{proof}

By Proposition \ref{proposition: k-dimU=dim(im tildeh) at a point} we have that the umbilical space $\U_p$ and the shear space $\Im\,\tildeh_p$ are such that
\begin{align*}
\U_p=(\Im\,\tildeh_p)^\perp \qquad \text{and} \qquad \dim\,\U_p + \dim\,\Im\,\tildeh_p = k.
\end{align*}
However, the intersection $\U_p\cap\Im\,\tildeh_p$ might be non-empty, and consequently the direct sum of the two spaces does not generate, in general, the whole normal space. For example, if $p$ is umbilical with respect to some vector $\xi_p$ that is null, i.e. $\g(\xi_p,\xi_p)=0$, then $\xi_p$ might belong to $\Im\,\tildeh_p$. Actually, in case $\M$ is a \emph{Riemannian} manifold, one easily checks that $\Im\,\tildeh_p\cap\U_p=\emptyset$ and one has 
\begin{align*}
T_p\S^\perp = \Im\,\tildeh_p \oplus \U_p \hspace{2cm} (\M \, \, \mbox{Riemannian}).
\end{align*}

\begin{remark}\label{remark: constant dimension of the shear and umbilical spaces}
Proposition \ref{proposition: k-dimU=dim(im tildeh) at a point} implicitly shows that if the dimension of the shear spaces $\Im\,\tildeh_p$ is constant on $\S$ then the dimension of the umbilical spaces $\U_p$ also is, and vice versa.
\end{remark}

By Proposition \ref{proposition: k-dimU=dim(im tildeh) at a point} and Remark \ref{remark: constant dimension of the shear and umbilical spaces} follows the next corollary.

\begin{corollary}\label{corollary: k-dimU=dim(im tildeh)}
Assume that the dimension of the shear spaces $\Im\,\tildeh_p$ is constant on $\S$ (equivalently, that the dimension of the umbilical spaces $\U_p$ is constant on $\S$). Then $\Im\,\tildeh$ and $\U$ are well defined and we have
\begin{align*}
\U = (\Im\,\tildeh)^\perp.
\end{align*}
Moreover,
\begin{align*}
k - \dim\,\U = \dim\, \Im \, \tildeh.
\end{align*}
\end{corollary}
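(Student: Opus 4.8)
The plan is to lift the pointwise result of Proposition \ref{proposition: k-dimU=dim(im tildeh) at a point} to the global level, the constant-dimension hypothesis being precisely what is needed to make both $\Im\,\tildeh$ and $\U$ well-defined modules. First I would invoke Remark \ref{remark: constant dimension of the shear and umbilical spaces} to record that the assumption ``$\dim\,\Im\,\tildeh_p$ is constant on $\S$'' is equivalent to ``$\dim\,\U_p$ is constant on $\S$''. Under either formulation both objects exist: by the definition of the shear space, constancy of $\dim\,\Im\,\tildeh_p$ makes $\Im\,\tildeh=\bigcup_{p\in\S}\Im\,\tildeh_p$ a module of rank $d$, and likewise constancy of $\dim\,\U_p$ makes $\U=\bigcup_{p\in\S}\U_p$ a module of rank $m$.

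For the identity $\U=(\Im\,\tildeh)^\perp$ I would argue by testing membership fibrewise. A normal vector field $\eta\in\normsp$ lies in $(\Im\,\tildeh)^\perp$ if and only if $\g(\eta,\xi)=0$ for every $\xi\in\Im\,\tildeh$; since the pairing is evaluated pointwise and the fibre satisfies $\Im\,\tildeh_p=\{\xi_p:\xi\in\Im\,\tildeh\}$ by the very definition of the module, this is equivalent to $\g(\eta_p,\xi_p)=0$ for every $p\in\S$ and every $\xi_p\in\Im\,\tildeh_p$, i.e.\ to $\eta_p\in(\Im\,\tildeh_p)^\perp$ for all $p$. By the pointwise equality $(\Im\,\tildeh_p)^\perp=\U_p$ furnished by Proposition \ref{proposition: k-dimU=dim(im tildeh) at a point}, this is in turn equivalent to $\eta_p\in\U_p$ for all $p$, which is exactly the condition $\eta\in\U$. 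The two membership conditions coincide, giving $\U=(\Im\,\tildeh)^\perp$.

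The dimension relation is then immediate. Since by hypothesis $\dim\,\U_p=m$ and $\dim\,\Im\,\tildeh_p=d$ are independent of $p$, these common values are by definition $\dim\,\U$ and $\dim\,\Im\,\tildeh$; the pointwise identity $k-\dim\,\U_p=\dim\,\Im\,\tildeh_p$ of Proposition \ref{proposition: k-dimU=dim(im tildeh) at a point} then reads $k-m=d$, that is $k-\dim\,\U=\dim\,\Im\,\tildeh$.

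The only genuinely delicate point---and the one I would treat most carefully---is the step that lets me pass between the global orthogonal complement $(\Im\,\tildeh)^\perp$, understood as a module of sections, and the union of the fibrewise complements $(\Im\,\tildeh_p)^\perp$. This exchange is licensed exactly by the constant-rank hypothesis: it guarantees that $\Im\,\tildeh$ is a genuine subbundle of the normal bundle, so that its orthogonal complement is again a subbundle whose fibres are precisely the $(\Im\,\tildeh_p)^\perp$, and a section lies in it if and only if it does so at every point. Without constant rank the fibre dimensions could jump and the naive union need not be a module at all, which is why the hypothesis is imposed in this form.
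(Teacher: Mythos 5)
Your proof is correct and takes essentially the same route as the paper: the paper states the corollary as following directly from Proposition \ref{proposition: k-dimU=dim(im tildeh) at a point} together with Remark \ref{remark: constant dimension of the shear and umbilical spaces}, with no further written argument, and your fibrewise lifting is exactly the elaboration it leaves implicit. The only nuance is in your final paragraph: the passage between the global complement and the fibrewise complements does not really hinge on constant rank (the fibres $\Im\,\tildeh_p$ are spanned by values of the sections $\tildeh(X,Y)$ in any case); the constancy hypothesis is what makes $\Im\,\tildeh$, $\U$ and their dimensions well defined in the sense of the paper's definitions.
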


From now on, and for the sake of conciseness, we will assume that the dimension of the shear spaces $\Im\,\tildeh_p$ is constant on $\S$. We will consider that $\S$ is umbilical with respect to \emph{exactly} $m$ linearly independent umbilical directions, that is to say, $\dim\,\U=m$. Notice, however, that all results make sense also if stated pointwise.

\subsection{Characterization of umbilical spacelike submanifolds}

We will denote by $\wedge$ the wedge product of one-forms and by $\flat$ the musical isomorphism: if $V$ is a vector field on $(\M,\g)$, then its associated one-form $V^\flat$ is given by $V^\flat(Z)=\g(V,Z)$ for every vector field $Z$ on $\M$.

\begin{proposition}\label{proposition: dimU=m iff the total shear tensor wedge product is zero}
Let $\Phi: (\S,g) \to (\M,\g)$ be an isometric immersion of an $n$-dimensional Riemannian manifold into a semi-Riemannian manifold with co-dimension $k$. Let $\U$ be the umbilical space of $\S$, then $\dim\,\U=m$ if and only if the total shear tensor satisfies
\begin{align*}
{\bigwedge}^{k-m+1} \,\, \tildeh^\flat = 0
\end{align*}
with
\begin{align*}
{\bigwedge}^{k-m} \,\, \tildeh^\flat \neq 0.
\end{align*}
\end{proposition}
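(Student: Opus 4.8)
The plan is to reduce the statement to a single linear-algebra fact about exterior powers of the values of $\tildeh^\flat$, and then feed in the dimension count already obtained in Proposition~\ref{proposition: k-dimU=dim(im tildeh) at a point} and Corollary~\ref{corollary: k-dimU=dim(im tildeh)}. Since by standing assumption $\dim\,\Im\,\tildeh_p$ is constant on $\S$, I would argue at an arbitrary point $p\in\S$ and then let constancy promote the pointwise conclusions to global ones. Writing $d:=\dim\,\Im\,\tildeh_p$, Proposition~\ref{proposition: k-dimU=dim(im tildeh) at a point} gives $d=k-\dim\,\U_p$, so the target $\dim\,\U=m$ is equivalent to $d=k-m$ at every point, and it suffices to show that the two wedge conditions together characterise $d=k-m$.

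First I would pin down the object $\tildeh^\flat$. For $v,w\in T_p\S$ the value $\tildeh^\flat(v,w)=(\tildeh(v,w))^\flat$ is a one-form on the normal space, and as $v,w$ range over $T_p\S$ these one-forms span exactly $(\Im\,\tildeh_p)^\flat$. Because $T_p\S$ is spacelike, hence non-degenerate, its orthogonal complement $T_p\S^\perp$ is non-degenerate as well, so $\flat$ restricts to an isomorphism on the normal space and $\dim\,(\Im\,\tildeh_p)^\flat=d$. The exterior power $\bigwedge^{r}\tildeh^\flat$ is then read as the tensor whose value on $(v_1,w_1,\dots,v_r,w_r)$ is the $r$-form $\tildeh^\flat(v_1,w_1)\wedge\cdots\wedge\tildeh^\flat(v_r,w_r)$, and ``$\bigwedge^{r}\tildeh^\flat=0$'' means that this vanishes for every choice of the tangent arguments.

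The crux is the elementary claim that $\bigwedge^{r}\tildeh^\flat\neq 0$ if and only if $d\geq r$. For the ``if'' direction, a spanning family of a $d$-dimensional space with $d\geq r$ contains $r$ linearly independent covectors, which by construction are realised as values $\tildeh^\flat(v_a,w_a)$; their wedge is non-zero, so the tensor does not vanish on these particular arguments. For the ``only if'' direction, if $d<r$ then any $r$ of the covectors $\tildeh^\flat(v_a,w_a)$ lie in the $d$-dimensional space $(\Im\,\tildeh_p)^\flat$, are therefore linearly dependent, and wedge to zero; hence the tensor vanishes identically. Applying the claim twice shows that $\bigwedge^{k-m+1}\tildeh^\flat=0$ is equivalent to $d\leq k-m$, while $\bigwedge^{k-m}\tildeh^\flat\neq 0$ is equivalent to $d\geq k-m$; imposing both gives precisely $d=k-m$, i.e.\ $\dim\,\U_p=m$ by the reduction above.

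The only genuine obstacle I anticipate is bookkeeping rather than conceptual: making precise the extraction of $r$ linearly independent values $\tildeh^\flat(v_a,w_a)$ from the spanning family, so that non-vanishing is witnessed by honest tangent arguments and not merely by abstract linear combinations, and verifying that the non-degeneracy of $\g$ on $T_p\S^\perp$ is exactly what lets one pass between $\Im\,\tildeh_p$ and its $\flat$-image without changing dimension. Finally I would globalise: the vanishing ``$\bigwedge^{k-m+1}\tildeh^\flat=0$'' is read as an identity of tensor fields on $\S$, while ``$\bigwedge^{k-m}\tildeh^\flat\neq 0$'' means non-vanishing at some point; since $d$ is constant on $\S$, the value of $d$ detected at one point holds everywhere, so the pointwise equivalences assemble into the stated global characterisation $\dim\,\U=m$.
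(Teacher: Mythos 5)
Your proposal is correct and is essentially the paper's own argument: both hinge on the elementary fact that one-forms wedge to zero exactly when they are linearly dependent, combined with the dimension relation $k-\dim\,\U=\dim\,\Im\,\tildeh$ from Proposition~\ref{proposition: k-dimU=dim(im tildeh) at a point} and Corollary~\ref{corollary: k-dimU=dim(im tildeh)}. Your repackaging as a single pointwise claim (${\bigwedge}^{r}\,\tildeh^\flat\neq 0$ if and only if $\dim\,\Im\,\tildeh_p\geq r$) applied with $r=k-m$ and $r=k-m+1$ is the same argument in a cleaner form, and your explicit extraction of linearly independent values $\tildeh^\flat(v_a,w_a)$ from the spanning family merely makes precise the step the paper covers with ``it easily follows''.
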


Here by ${\bigwedge}^q\omega$ we mean $q$ times the wedge product $\omega\wedge\cdots\wedge\omega$ of any one form $\omega$.
Notice that when we write, for instance, $\tildeh^\flat\wedge\tildeh^\flat$, we mean $\tildeh^\flat(X_1,Y_1)\wedge\tildeh^\flat(X_2,Y_2)$ for all $X_1,Y_1,X_2,Y_2\in\tangsp$.

\begin{proof}
Suppose that the dimension of the umbilical space is $m$. By Corollary \ref{corollary: k-dimU=dim(im tildeh)} it follows that $\dim\,\Im\,\tildeh=k-m$ and we can then decompose the total shear tensor by means of exactly $k-m$ normal vector fields. Explicitly, there exist $k-m$ shear operators $\{\tildeA_i\}_{i=1}^{k-m}$ and $k-m$ vector fields $\zeta_1,\ldots,\zeta_{k-m}\in\normsp$ such that
\begin{align*}
\tildeh(X,Y)
= \sum_{i=1}^{k-m} g(\tildeA_i X,Y) \zeta_i.
\end{align*}
Because these vector fields are linearly independent, their corresponding one-forms $\zeta_r^\flat$ also are. From this fact it easily follows that, on one hand, the wedge product $k-m$ times of $\tildeh^\flat$ is different from zero and, on the other hand, that the wedge product $k-m+1$ times of $\tildeh^\flat$ must be zero.

Now suppose that the total shear tensor satisfies the conditions in the statement. By algebra's basic results, we know that $l$ one-forms $\omega_1,\ldots,\omega_l$ are linearly independent if and only if their wedge product $\omega_1\wedge\cdots\wedge\omega_l$ is not zero. Equivalently, they are linearly dependent if and only if their wedge product is zero. Hence, by hypothesis, among the sets of $k$ one-forms $\{\tildeh(X_1,Y_1)^\flat,\ldots,\tildeh(X_k,Y_k)^\flat\}$ constructed for arbitrary $X_1,Y_1,\ldots,X_k,Y_k\in\tangsp$, there exist exactly $k-m$ which are linearly independent. The same holds for the corresponding normal vector fields $\{\tildeh(X_1,Y_1),\ldots,\tildeh(X_k,Y_k)\}$. By Definition \ref{definition: shear space of S at p} of shear space, this implies that the dimension of $\Im\,\tildeh$ is $k-m$. Finally by Corollary \ref{corollary: k-dimU=dim(im tildeh)}, we obtain $\dim\,\U=m$.
\end{proof}

The following theorem summarizes the results presented in Corollary \ref{corollary: k-dimU=dim(im tildeh)} and Proposition \ref{proposition: dimU=m iff the total shear tensor wedge product is zero}.

\begin{theorem}\label{main theorem}
Let $\Phi: (\S,g) \to (\M,\g)$ be an isometric immersion of an $n$-dimensional Riemannian manifold into a semi-Riemannian manifold with co-dimension $k$. Let $\U$ and $\Im\,\tildeh$ be the umbilical space and the shear space, respectively, of $\S$. Then the following conditions are all equivalent:
\begin{itemize}
\item[(i)]
the umbilical space $\U$ has dimension $m$;
\item[(ii)]
the shear space $\Im\,\tildeh$ has dimension $k-m$;
\item[(iii)]
the total shear tensor satisfies
\begin{align*}
{\bigwedge}^{k-m+1} \,\, \tildeh^\flat = 0
\end{align*}
with
\begin{align*}
{\bigwedge}^{k-m} \,\, \tildeh^\flat \neq 0;
\end{align*}
\item[(iv)]
any $k-m+1$ shear operators $\tildeA_{\xi_1},\ldots,\tildeA_{\xi_{k-m+1}}$ are linearly dependent (and there exist precisely $k-m$ shear operators that are linearly independent).
\end{itemize}
\end{theorem}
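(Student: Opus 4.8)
The plan is to prove the theorem as a synthesis of results already in hand, since it is explicitly advertised as summarizing Corollary~\ref{corollary: k-dimU=dim(im tildeh)} and Proposition~\ref{proposition: dimU=m iff the total shear tensor wedge product is zero}. Rather than chasing a full cycle of implications, I would establish three direct equivalences connecting the four conditions through a single linking pattern: (i)$\Leftrightarrow$(ii), (i)$\Leftrightarrow$(iii), and (ii)$\Leftrightarrow$(iv). These three edges already form a connected graph on the four statements, so transitivity yields all pairwise equivalences at once.

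First I would dispatch the equivalence of (i) and (ii). This is immediate from Corollary~\ref{corollary: k-dimU=dim(im tildeh)}, which, under the standing assumption that the dimension of the shear spaces is constant on $\S$, gives the identity $k-\dim\,\U=\dim\,\Im\,\tildeh$. Since $k$ is a fixed constant, $\dim\,\U=m$ holds if and only if $\dim\,\Im\,\tildeh=k-m$, and nothing further is required. Next I would record that the equivalence of (i) and (iii) is precisely the content of Proposition~\ref{proposition: dimU=m iff the total shear tensor wedge product is zero}: the condition $\dim\,\U=m$ is exactly equivalent to the simultaneous vanishing of ${\bigwedge}^{k-m+1}\tildeh^\flat$ together with the non-vanishing of ${\bigwedge}^{k-m}\tildeh^\flat$, so this edge costs only a citation.

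Finally I would handle (ii)$\Leftrightarrow$(iv) by invoking formula~(\ref{dim(Im tildeh) = number of linearly independent shear operators}), which characterizes $\dim\,\Im\,\tildeh$ as the maximal number of linearly independent shear operators. Under this characterization, the statement $\dim\,\Im\,\tildeh=k-m$ is equivalent to asserting that this maximum equals $k-m$, i.e.\ that some collection of $k-m$ shear operators is linearly independent while every collection of $k-m+1$ of them is linearly dependent. That is verbatim condition~(iv). Combining the three equivalences, all four statements are mutually equivalent and the theorem follows.

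The only point where I anticipate needing genuine care is the bookkeeping between the pointwise and the global formulations. Formula~(\ref{dim(Im tildeh) = number of linearly independent shear operators}) and the analysis surrounding it are phrased at a single point $p$, whereas (iv) and the shear space $\Im\,\tildeh$ in the theorem are global objects over $\S$. The standing hypothesis of constant shear dimension is what licenses the passage from the pointwise count to the global one: as noted after the definition of $\Im\,\tildeh$, under this hypothesis the shear space is a module of dimension $d$ over the ring of functions on $\S$, so a maximal family of pointwise-independent shear operators extends to a maximal family of globally independent ones without obstruction. Beyond this consistency check, the argument is purely a matter of assembling the cited results.
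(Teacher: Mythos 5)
Your proposal is correct and matches the paper's own (implicit) argument: the paper presents the theorem as a pure synthesis of Corollary \ref{corollary: k-dimU=dim(im tildeh)}, Proposition \ref{proposition: dimU=m iff the total shear tensor wedge product is zero}, and the extension of formula (\ref{dim(Im tildeh) = number of linearly independent shear operators}) to the global shear space, which is exactly the three-edge linking pattern you describe. Your attention to the pointwise-versus-global bookkeeping under the constant-dimension hypothesis is also consistent with the paper's remark that the pointwise properties of the shear operators extend to $\Im\,\tildeh$ accordingly.
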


\subsection{Special cases}

Using Theorem \ref{main theorem} some particular situations are worth mentioning, for example:
\begin{itemize}
\item
If $\dim\,\U=k$ we have $\tildeh(X,Y)^\flat=0$ for every $X,Y\in\tangsp$, equivalently $\tildeh=0$, and the submanifold $\S$ is totally umbilical. In particular, $\Im\,\tildeh=\emptyset$ and $\normsp=\U$.
\item
If $\dim\,\U=k-1$ then $\dim\,\Im\,\tildeh=1$ and we have $\tildeh(X_1,Y_1)^\flat \wedge \tildeh(X_2,Y_2)^\flat=0$ for every $X_1,Y_1,X_2,Y_2\in\tangsp$. It follows that there exist a normal vector field $G\in\normsp$ and a properly normalized self-adjoint operator $\tildeA$ such that $\tildeh(X,Y)=g(\tildeA X,Y)G$ for every $X,Y\in\tangsp$. This was the case studied in \cite{CSV2016} for $k=2$.
\item
If $\dim\,\U=0$ then there are no umbilical directions and $\Im\,\tildeh=\normsp$.
\end{itemize}
Our results can be applied to all other cases too and open the door for a novel analysis of the structure of the umbilical space.

\bigskip\bigskip

\noindent
\small{\textbf{Acknowledgement:} We would like to thank the referee for comments and improvements.}

\bigskip

\end{document}